\newcommand{\PP}{\mathbb{P}}
\newcommand{\Q}{\mathbb{Q}}
\newcommand{\Z}{\mathbb{Z}}
\newcommand{\fq}{\mathfrak{q}}
\newcommand{\OO}{\mathcal{O}}
\newcommand{\fp}{\mathfrak{p}}
\DeclareMathOperator{\Gal}{Gal}
\newtheorem{theorem}{Theorem}
\newtheorem{lemma}[theorem]{Lemma}
\newtheorem{proposition}[theorem]{Proposition}
\theoremstyle{definition}
\newtheorem{example}[theorem]{Example}
\newtheorem{definition}[equation]{Definition}
\theoremstyle{remark}
\newtheorem{remark}[theorem]{Remark}
\definecolor{darkgreen}{rgb}{0,0.5,0}
\begin{document}

\title[]{
del Pezzo surfaces with one bad prime over 
cyclotomic $\mathbb{Z}_\ell$-extensions
}

\begin{abstract}
Let $K$ be a number field and $S$ a finite set of primes of $K$. 
Scholl proved that there are only finitely many $K$-isomorphism classes of del
Pezzo surfaces of any degree $1 \le d \le 9$ over $K$ with good reduction away from $S$. 

Let instead $K$ be the cyclotomic $\Z_5$-extension of $\Q$.
In
this paper, we show,
for $d=3$, $4$, that there are infinitely many $\overline{\Q}$ isomorphism
classes of del Pezzo surfaces, defined over $K$, 
with good reduction away from the unique prime above $5$.  
\end{abstract}

\author{Maryam Nowroozi}
\address{Mathematics Institute\\
   University of Warwick\\
    CV4 7AL \\
    United Kingdom}

\email{maryam.nowroozi@warwick.ac.uk}

\date{\today}
\thanks{
Nowroozi is supported by the EPSRC studentship.}
\keywords{Shafarevich conjecture, Abelian varieties, del Pezzo surfaces, cyclic fields, cyclotomic fields.}
\makeatletter
\@namedef{subjclassname@2020}{%
\textup{2020} Mathematics Subject Classification}
\makeatother
\subjclass[2020]{11G35 (14G05).}

\maketitle
\section{Introduction}
Let $\ell$ be a rational
prime and $r$ a positive integer.
Write $\Q_{r,\ell}$ for the unique degree $\ell^r$ totally real subfield
of $\cup_{n=1}^\infty \Q(\mu_n)$, where $\mu_n$ denotes
the set of $\ell^n$-th roots of $1$. We let
$\Q_{\infty,\ell}=\cup_r \Q_{r,\ell}$;
this is called the $\Z_\ell$-cyclotomic extension of $\Q$,
and $\Q_{r,\ell}$ is called the $r$-th layer of $\Q_{\infty,\ell}$.
Now let $K$ be a number field, and write
$K_{\infty,\ell}=K \cdot \Q_{\infty,\ell}$
and $K_{r,\ell}=K \cdot \Q_{r,\ell}$.
To ease notation we shall sometimes write $K_\infty$
for $K_{\infty,\ell}$. 
Many theorems and conjectures regarding the arithmetic
of curves and varieties over number fields have
analogues over $K_\infty$. We mention some examples.
\begin{enumerate}[(I)]
\item The Mordell--Weil theorem asserts that for
an abelian variety $A$ over a number field $K$,
the Mordell--Weil group $A(K)$ is finitely generated.
A celebrated conjecture of Mazur \cite{Mazur_1972} asserts that $A(K_\infty)$
is finitely generated for an abelian variety $A$ over $K_\infty$.
\item Let $X$ be a curve of genus $\ge 2$ over a number field $K$. 
Faltings' theorem (previously known as the Mordell conjecture) \cite{Faltings} asserts that $X(K)$ is finite. A conjecture
of Parshin and Zarhin \cite[page 91]{Zarhin_Parshin}
asserts that $X(K_\infty)$ is finite, for a curve $X$ of genus $\ge 2$
over $K_\infty$.
\item
A theorem of Zarhin \cite[Corollary 4.2]{Zarhin_2010},
asserts that the Tate homomorphism conjecture
(also a theorem of Faltings \cite{Faltings} over number fields)
continues to hold over $K_\infty$.
\end{enumerate}

The Shafarevich conjecture for abelian varieties asserts that for a number field $K$, a finite set of primes $S$ of $K$, and a given dimension $d \ge 1$,
there are only finitely many principally polarized
abelian varieties $A$ defined over $K$
of dimension $d$ and good reduction away from $S$.
The Shafarevich conjecture was proved by Faltings \cite{Faltings}
in the same paper in which he proved the Tate homomorphism conjecture
and the Mordell conjecture. In a recent paper \cite{Visser},
Siksek and Visser show that the Shafarevich conjecture for abelian varieties does not hold over $K_\infty$. For example, they construct infinitely many principally polarized abelian surfaces 
defined over $\Q_{\infty,11}$ with good reduction away from
$2$, $11$, that are pairwise non-isomorphic over $\overline{\Q}$.

In view of the above theorems, conjectures and counterexamples,
it is interesting to test whether Shafarevich-type
conjectures for other families of varieties continue to hold
over $K_\infty$.
In 1985 Scholl proved the anaologue of the Shafarevich conjecture
for del Pezzo surfaces. More precisely,
Scholl \cite{Scholl} proved that, for any $1 \le d \le 9$,
and for any number field $K$ and for $S$ a finite set of primes of $K$,
then there are only finitely many $K$-isomorphism classes of del Pezzo surfaces
of degree $d$ over $K$ with good reduction away from $S$.
In this paper, we construct infinite families of del Pezzo surfaces over
$\mathbb{Q}_{\infty,5}$ with good reduction away from $5$.
\begin{theorem}\label{thm:degree3}
Let $r\geq 1$. Then there is a del Pezzo surface $X_{r}$ of degree $3$ 
defined over $\Q_{r,5}$ with good reduction away from the unique
prime above $5$. Moreover, within the family $\{X_r \; : \; r \ge 1\}$,
there is an infinite subfamily whose members are pairwise non-isomorphic
over $\overline{\Q}$.
\end{theorem}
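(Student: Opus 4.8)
The plan is to realise each $X_r$ as the blow-up of the projective plane along six $\Q_{r,5}$-rational points, controlling the reduction through the configuration of those points. Recall that over an algebraically closed field a del Pezzo surface of degree $3$ is exactly the blow-up of $\PP^2$ at six points in general position (no three collinear, not all six on a conic), and that this is a statement in families: if $R$ is a Dedekind domain and $P_1,\dots ,P_6\in\PP^2(R)$ are six sections which remain in general position — in particular pairwise disjoint — in every geometric fibre of $\PP^2_R\to\operatorname{Spec}R$, then $\mathrm{Bl}_{P_1,\dots ,P_6}\PP^2_R$ is a smooth projective $R$-scheme every fibre of which is a del Pezzo surface of degree $3$. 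Taking $R=\OO_{\Q_{r,5}}[1/5]$, it thus suffices to exhibit six points of $\PP^2(\Q_{r,5})$ lying in general position modulo every prime of $\OO_{\Q_{r,5}}$ not above $5$; the blow-up of $\PP^2_R$ along the corresponding sections is then a del Pezzo scheme over $R$ with generic fibre the desired $X_r$, which is precisely what it means for $X_r$ to have good reduction away from the prime above $5$.

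The next step is to translate ``in general position modulo every prime away from $5$'' into a finite system of $S$-unit conditions, where $S=\{\fp_5\}$. After acting by $\mathrm{PGL}_3$ one may take $P_1,\dots ,P_4$ to be the standard frame $[1:0:0]$, $[0:1:0]$, $[0:0:1]$, $[1:1:1]$ and write $P_5=[1:a:b]$, $P_6=[1:c:d]$. Then ``six distinct points, no three collinear, not all six on a conic'' amounts to requiring that each of a fixed finite list of polynomials in $a,b,c,d$ — certain of the coordinates, their pairwise differences, the $3\times 3$ minors attached to triples of the six points, and the $6\times 6$ conic determinant — be nonzero modulo every prime away from $5$; as these values already lie in $\OO_{\Q_{r,5}}[1/5]$, that is the same as asking each of them to be an $S$-unit, i.e.\ to have norm to $\Q$ equal to $\pm$ a power of $5$. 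The reason this ought to be solvable over $\Q_{r,5}$, whereas over $\Q$ Scholl's theorem forces only finitely many solutions, is that the $S$-unit group of $\OO_{\Q_{r,5}}[1/5]$ grows without bound along the cyclotomic tower; concretely I would assemble $a,b,c,d$ from cyclotomic units of $\Q_{r,5}$ together with the uniformiser $1-\zeta_5$ at $\fp_5$, using that the norms to $\Q$ of such elements, and of the polynomial combinations of them occurring on the list, can be evaluated as products of values of cyclotomic polynomials and come out to be $\pm$ a power of $5$. I expect this to be the principal obstacle: the configuration must be arranged so that \emph{all} of the minors and determinants on the list are \emph{simultaneously} $S$-units — symmetric guesses, such as spacing the six points multiplicatively by powers of a single cyclotomic unit $u$, tend to fail because some minor of the shape $u^2+1$ is then not an $S$-unit — so the coordinates have to be chosen with care, exploiting both the abundance of cyclotomic units and the extra room afforded by $S$-units over units (an $S$-unit may be divisible by $\fp_5$), and moreover so that $\Q(a,b,c,d)=\Q_{r,5}$.

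Finally, to extract an infinite subfamily pairwise non-isomorphic over $\overline{\Q}$, I would argue via the field of moduli. Smooth cubic surfaces are GIT-stable, so the coarse moduli space $\mathcal M$ of del Pezzo surfaces of degree $3$ is a variety over $\Q$, and each $X_r$ has a well-defined field of moduli $M_r\subseteq\overline{\Q}$, namely the residue field of the point $[X_r]\in\mathcal M$; it is a number field, it is an invariant of the $\overline{\Q}$-isomorphism class, and it is contained in every field of definition, so $M_r\subseteq\Q_{r,5}$. Since $\Q_{r,5}/\Q$ is cyclic of degree $5^r$, its subfields are exactly the layers $\Q=\Q_{0,5}\subset\Q_{1,5}\subset\dots\subset\Q_{r,5}$, so $M_r=\Q_{j(r),5}$ for some $j(r)\le r$. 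On the other hand, over the locus of smooth cubic surfaces the tautological map sending a normalised configuration $(a,b,c,d)$ to the class of its blow-up is a morphism of $\Q$-varieties all of whose fibres are finite of cardinality at most $72\cdot 6!=|W(E_6)|=51840$ (a del Pezzo surface of degree $3$ has $72$ blow-down structures to $\PP^2$) — so $[\Q(a,b,c,d):M_r]\le 51840$, and since we have arranged $\Q(a,b,c,d)=\Q_{r,5}$ this forces $5^{\,r-j(r)}=[\Q_{r,5}:M_r]\le 51840$, hence $j(r)\ge r-6$. Therefore $j(r)\to\infty$, infinitely many distinct fields $M_r$ occur, infinitely many $\overline{\Q}$-isomorphism classes occur among the $X_r$, and selecting one representative per class yields the required infinite subfamily. (An alternative route, trading six points in $\PP^2$ for five points on $\PP^1$ at the price of some extra care at the prime $2$, would realise $X_r$ as a conic bundle over $\PP^1$ whose five degenerate fibres lie over $0,1,\infty$ and two further elements $\mu,\nu\in\Q_{r,5}$ chosen so that $\mu,\nu,\mu-1,\nu-1,\mu-\nu$ are all $S$-units.)
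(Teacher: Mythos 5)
Your overall architecture matches the paper's (blow up $\PP^2$ at six points whose general position persists modulo every prime away from $5$, then separate $\overline{\Q}$-isomorphism classes by a moduli-theoretic finiteness argument), but there is a genuine gap at the heart of the proof: you never produce the six points. You reduce the problem to finding $a,b,c,d$ with $\Q(a,b,c,d)=\Q_{r,5}$ such that a specific finite list of minors and the conic determinant are all $S$-units for $S=\{\fp_5\}$, and you explicitly flag this as ``the principal obstacle'' without resolving it; but this is exactly the content of the theorem, and Scholl-type finiteness over each fixed layer shows it cannot be waved through on the grounds that the $S$-unit rank grows. Moreover, your normalisation forces the six points to be individually $\Q_{r,5}$-rational, which is a strictly stronger demand than needed and is part of what makes your simultaneous $S$-unit problem hard. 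The paper's key idea, which is missing from your proposal, is to drop individual rationality: take $\zeta=\zeta_{5^{r+1}}$, $P=(\zeta^2:\zeta:1)$, and blow up the $\langle\tau\rangle$-orbit of $P$ (where $\tau(\zeta)=\zeta^{\delta}$, $\delta^2\equiv-1$, $\delta\equiv 2\pmod 5$) together with $(1:0:0)$ and $(0:1:0)$. The set is stable under $\Gal(\Q(\zeta)/\Q_{r,5})$, so the blow-up descends to $\Q_{r,5}$, and all the relevant determinants are, up to roots of unity, products of factors $\zeta^{\mu}-1$ with $5\nmid\mu$, hence supported only at the prime above $5$; this is the entire verification, done in the paper by a short explicit computation (the points are in general position iff $(u^4-1)(v^4-1)(u^2-v^2)(u^2v^2-1)\neq 0$ with $u=\zeta$, $v=\zeta^{\delta}$). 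Without some such explicit configuration, both halves of your argument are unproven, since your non-isomorphism step also relies on the unestablished hypothesis $\Q(a,b,c,d)=\Q_{r,5}$.

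On the second half: your field-of-moduli argument (the moduli point of $X_r$ has residue field $M_r\subseteq\Q_{r,5}$, hence $M_r=\Q_{j(r),5}$; the configuration-to-moduli map has fibres of size at most $\lvert W(E_6)\rvert=51840<5^7$, so $[\Q_{r,5}:M_r]\leq 51840$ and $j(r)\to\infty$) is a genuinely different and attractive route from the paper's, which instead computes Clebsch--Salmon invariants of $X_{u,v}$ in pentahedral form and bounds, via resultants, the number of parameter pairs $(r,s)$ giving a surface $\overline{\Q}$-isomorphic to a fixed $X_{u,v}$. Your route avoids invariant-theoretic computation and gives a quantitative statement about fields of moduli along the tower, but it is conditional on knowing the field generated by the normalised configuration, which again comes back to the missing construction; the paper's route needs no control of that field, only that the parameter pairs $(\zeta_{5^{r+1}},\zeta_{5^{r+1}}^{\delta})$ are distinct for distinct $r$.
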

\begin{theorem}\label{thm:degree4}
Let $r\geq 1$. Then there is a del Pezzo surface $Y_{r}$ of degree $4$ 
defined over $\Q_{r,5}$ with good reduction away from the unique
prime above $5$. Moreover, within the family $\{Y_r \; : \; r \ge 1\}$,
there is an infinite subfamily whose members are pairwise non-isomorphic
over $\overline{\Q}$.
\end{theorem}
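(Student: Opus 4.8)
I will prove Theorem~\ref{thm:degree4}; Theorem~\ref{thm:degree3} should follow from the same device after adjoining one more rational point, placed off the conic below, at the price of a few additional non-degeneracy conditions. The plan rests on two inputs. First: over any field $F$, the blow-up of $\PP^2_F$ at a reduced zero-dimensional closed subscheme of degree $5$ whose five geometric points are in general position (no three collinear) is a del Pezzo surface of degree $4$ --- anticanonically, a smooth complete intersection of two quadrics in $\PP^4_F$. Second, the elementary remark that makes the construction go: if those five points all lie on a smooth conic then no three of them can be collinear, since a line meets a conic in at most two points; so the only degeneration that has to be avoided is a collision of two of the five points.

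First I would fix $r\ge 1$, set $K=\Q_{r,5}$ and $L=\Q_{r+1,5}$, and recall that $L/K$ is cyclic of degree $5$ and is unramified at every prime of $K$ except the unique prime $\fp$ above $5$ (which is totally ramified in $K/\Q$). I choose $\theta\in\OO_L$ with $L=K(\theta)$ and, crucially, with $\OO_L[1/5]=\OO_K[1/5][\theta]$: such $\theta$ exists --- a suitable Gauss period of conductor $5^{r+2}$ works --- because $\OO_L[1/5]$ is a finite étale extension of the Dedekind domain $\OO_K[1/5]$, hence monogenic. Let $\cC=\{XZ=Y^2\}\subset\PP^2$, which is isomorphic to $\PP^1$ over $\Z$ and in particular smooth in every residue characteristic, $2$ and $3$ included, and let $\mathfrak{x}_r\subset\cC_K\subset\PP^2_K$ be the degree-$5$ closed point corresponding to $\theta$ under this isomorphism (i.e.\ the image of $\operatorname{Spec} L$ under $\theta\mapsto(1:\theta:\theta^2)$). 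Put $Y_r:=\operatorname{Bl}_{\mathfrak{x}_r}\PP^2_K$; by the first input, $Y_r$ is a del Pezzo surface of degree $4$ over $\Q_{r,5}$. To see it has good reduction away from $5$, spread out over $S:=\operatorname{Spec}\OO_K[1/5]$: the choice $\OO_L[1/5]=\OO_K[1/5][\theta]$ makes the closure $\overline{\mathfrak{x}_r}$ of $\mathfrak{x}_r$ in $\PP^2_S$ equal to $\operatorname{Spec}\OO_L[1/5]$ as a closed subscheme, and since $L/K$ is unramified away from $\fp$ this is finite étale of degree $5$ over $S$. Hence over every closed point $\fq\nmid 5$ of $\operatorname{Spec}\OO_K$ the five geometric points of the fibre of $\overline{\mathfrak{x}_r}$ are distinct and lie on the smooth conic $\cC\bmod\fq$, so no three are collinear; they are therefore in general position, $\operatorname{Bl}_{\overline{\mathfrak{x}_r}}\PP^2_S\to S$ is smooth and proper with every geometric fibre a del Pezzo surface of degree $4$, and $Y_r$ has good reduction away from $\fp$.

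It then remains to extract an infinite pairwise $\overline{\Q}$-non-isomorphic subfamily. Two del Pezzo surfaces of degree $4$ are isomorphic over $\overline{\Q}$ iff their unordered five-point configurations on $\PP^1$ are equivalent under $\operatorname{PGL}_2$ up to relabelling, i.e.\ they give the same point of the coarse moduli space $M:=M_{0,5}/S_5$ (a surface over $\Q$). For $Y_r$ that point, say $m_r$, is attached to the five roots of the minimal polynomial $g_r\in K[t]$ of $\theta$, so $m_r\in M(\Q_{r,5})$; it is recorded by the values at $(\theta_1,\dots,\theta_5)$ of the $\operatorname{PGL}_2\times S_5$-invariant rational functions (classically, invariants of the binary quintic $\prod_{i}(x-\theta_i y)$), which are polynomials in the coefficients of $g_r$ and hence explicit elements of $\Q_{r,5}$. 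Since every proper subfield of $\Q_{r,5}$ is a lower layer $\Q_{j,5}$, the point $m_r$ fails to descend to $\Q_{r-1,5}$ exactly when one of these values is moved by a generator of $\Gal(\Q_{r,5}/\Q_{r-1,5})$; I would establish this for infinitely many $r$ by a direct computation with $g_r$ (equivalently, with Gauss periods of $5$-power conductor). Granting that, if infinitely many $m_r$ coincided their common value would already lie in $M(\Q_{r_0,5})$ for the least index $r_0$ occurring, contradicting the non-descent of some $m_r$ in the list with $r-1\ge r_0$ (as $M(\Q_{r_0,5})\subseteq M(\Q_{r-1,5})$); so infinitely many $\overline{\Q}$-isomorphism classes occur among the $Y_r$, which is Theorem~\ref{thm:degree4}.

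The step I expect to be the genuine obstacle is this last one: showing that the construction really ``uses up'' the whole layer $\Q_{r,5}$ --- that the invariants of the five conjugates of $\theta$ are not already defined over $\Q_{r-1,5}$ --- for infinitely many $r$. A sturdier fallback, should the invariant computation prove awkward, is to separate the $Y_r$ by the increasing complexity of their bad reduction at $\fp$: the five points of $\mathfrak{x}_r$ all collapse to one point modulo $\fp$, and the $\fp$-adic gaps $v(\theta_i-\theta_j)$, governed by the different of $L/K$, grow with $r$, so the degeneration of $Y_r$ at $\fp$ deepens; a $\overline{\Q}$-isomorphism $Y_r\cong Y_{r'}$ with $r<r'$ would have to reconcile these depths up to the explicit factor $e(\Q_{r',5}/\Q_{r,5})=5^{\,r'-r}$, which it cannot do for all $r'$. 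Everything else --- the general-position criterion on a conic, the étale spreading-out, and the harmlessness of residue characteristics $2$ and $3$ --- is routine.
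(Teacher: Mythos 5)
Your proposal replaces the paper's explicit construction (blowing up the $\langle\tau\rangle$-orbit of $(\zeta^2:\zeta:1)$, $\zeta$ a primitive $5^{r+1}$-st root of unity, together with $(1:0:0)$) by the orbit of a generator $\theta$ of the next layer $L=\Q_{r+1,5}$ placed on the conic $XZ=Y^2$. The idea is attractive, but the step that makes good reduction work is not established: you need a $\theta$ with $\OO_L[1/5]=\OO_K[1/5][\theta]$ (equivalently, all conjugate differences $\theta_i-\theta_j$ units outside $5$), and you justify its existence by the assertion that a finite \'etale extension of a Dedekind domain is monogenic. That assertion is false: for Dedekind's cubic field $\Q[x]/(x^3-x^2-2x-8)$, in which $2$ is unramified and totally split, inverting the ramified prime gives a finite \'etale extension $B$ of a Dedekind domain with $B\otimes\F_2\cong\F_2\times\F_2\times\F_2$, which is not monogenic. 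In your setting the local obstructions happen to vanish (small primes are inert in the layers), but local monogenicity does not produce one global generator, and the fallback suggestion that a Gauss period works is precisely what would have to be proved; note that by Gras's theorem the degree-$5$ field of conductor $25$ is not monogenic over $\Z$ at all, so monogenicity after inverting $5$ is a genuine claim needing an argument, not a routine remark. The paper avoids this entirely because its points have root-of-unity coordinates, so every relevant difference is, up to a unit, of the form $\zeta^\mu-1$ with $5\nmid\mu$, visibly a unit away from $5$ (Lemma~\ref{lem:Tuv} and Section~\ref{sec:construction}).

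The second half of the theorem — infinitely many $\overline{\Q}$-isomorphism classes among the $Y_r$ — is not proved in your proposal: you defer the decisive step ("I would establish this for infinitely many $r$ by a direct computation with $g_r$"), and the fallback via depths of degeneration at the prime above $5$ is not an argument as stated, since reduction data in that form is not an invariant of the $\overline{\Q}$-isomorphism class without substantial extra work. The paper closes this gap concretely: it identifies the $\overline{\Q}$-class of the degree-$4$ surface with the binary quintic $h(t,w)=\det(tA_F+wA_G)$ up to equivalence, takes the $\operatorname{SL}_2$-invariants $I_4,I_8,I_{12},I_{18}$, and shows by a resultant computation that for a fixed parameter pair $(u,v)$ there are at most finitely many (at most $518400$) pairs $(r,s)$ with $Y_{r,s}\cong_{\overline{\Q}}Y_{u,v}$; since the parameters $(\zeta,\zeta^{\delta})$ run through infinitely many distinct pairs, infinitely many classes occur. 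Your moduli-theoretic framing (the point of $M_{0,5}/S_5$ attached to the degeneracy quintic) is compatible with this, but some completed finiteness or non-descent argument of this kind is exactly what is missing.
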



We quickly sketch the construction of the surfaces $X_r$ and $Y_r$.
Let $\zeta=\zeta_{5^{r+1}}=\exp(2\pi i/5^{r+1})$. 
The extension $\Q(\zeta)/\Q$ is cyclic with Galois group
\[
\Gal(\Q(\zeta)/\Q) \; \cong \; \left(\Z/5^{r+1}\Z\right)^\times 
\; \cong \; \Z/4\Z \times \Z/5^r \Z.
\]
Let $\tau : \Q(\zeta) \rightarrow \Q(\zeta)$ be the field automorphism
given by
\[
\tau(\zeta)=\zeta^\delta
\]
where $\delta \in (\Z/5^{r+1}\Z)^\times$ is the element satisfying
\[
\delta^2 \equiv -1 \pmod{5^{r+1}}, \qquad \delta \equiv 2 \pmod{5};
\]
such $\delta$ exists and is unique by Hensel's lemma. 
We note that $\tau$ is an automorphism of order $4$.
Let
$H=\langle \tau \rangle$ be the subgroup of $\Gal(\Q(\zeta)/\Q)$
generated by $\tau$, and let $\Q_{r,5}$
by the fixed field of $H$ which has degree $5^r$ and 
Galois group isomorphic to $\Z/5^r \Z$. We note that
\[
\Q_{\infty,5}=\bigcup_{r=1}^\infty \Q_{r,5}
\]
Since $5$ is totally ramified in $\Q(\zeta)$, it is totally ramified
in $\Q_{r,5}$.

Let $P=(\zeta^2:\zeta:1)\in \PP^2(\Q(\zeta))$.
We let $X_{r}$ be the blow-up of $\PP^2$ in the six points 
\begin{equation}\label{eqn:six}
\lbrace \sigma(P): \sigma\in H \rbrace \cup \lbrace (1:0:0),(0:1:0)\rbrace.
\end{equation}
Let $Y_{r}$ be the blow-up of $\PP^2$ in the five points 
\begin{equation}\label{eqn:five}
\lbrace \sigma(P): \sigma\in H \rbrace \cup \lbrace (1:0:0)\rbrace.
\end{equation}
Since these sets of points are both stable 
under the action of $H$, the surfaces 
$X_{r}$ and $Y_{r}$ are defined over $\Q_{r,5}$.
\begin{lemma}\label{lem:generalposition}
The six points in \eqref{eqn:six} are in general position. Moreover,
for any prime ideal $\fq \subset \OO_{\Q_{r,5}}$ with $\fq \nmid 5$,
the reduction of the six points in \eqref{eqn:six} modulo
$\fq$ are in general position.
\end{lemma}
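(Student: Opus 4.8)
The plan is to recall that six distinct points of $\PP^2$ are in general position (so that their blow-up is a smooth cubic del Pezzo surface) precisely when no three of them are collinear and the six do not lie on a common conic; the analogous statement for the reductions is then read over the residue field $\OO_{\Q_{r,5}}/\fq$. The guiding observation is that \emph{five} of the six points of \eqref{eqn:six} lie on the conic $C : xz = y^2$: writing $\tau^i(\zeta) = \zeta^{\delta^i}$, each point $\sigma(P)$ has the shape $(t^2 : t : 1)$ with $t = \sigma(\zeta)$ and hence lies on $C$, and $(1:0:0) \in C$ as well, whereas $(0:1:0) \notin C$. This reduces the whole lemma to proving that (i) the six points are pairwise distinct and (ii) no three of them are collinear. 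Indeed, granting (i) and (ii), no four of the five points on $C$ are collinear, so $C$ is the unique conic through those five; any conic through all six would then equal $C$, contradicting $(0:1:0) \notin C$.

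For (i): the four points $\sigma(P) = (\zeta^{2\delta^i} : \zeta^{\delta^i} : 1)$, $i = 0,1,2,3$, are distinct because $\delta$ has exact order $4$ in $(\Z/5^{r+1}\Z)^\times$, so the middle coordinates $\zeta^{\delta^i}$ are distinct; the points $(1:0:0)$ and $(0:1:0)$ are visibly distinct from each other and, having zero last coordinate, from the four $\sigma(P)$. For (ii): $C$ is irreducible of degree $2$, so a line meets it in at most two points; hence no three of the five points on $C$ are collinear, which disposes of all triples except those containing $(0:1:0)$. A line through $(0:1:0)$ has equation $ax + cz = 0$; if it also passed through two of the $\sigma(P)$, with parameters $t_1 \ne t_2$, then $a t_1^2 + c = a t_2^2 + c = 0$, forcing $a = c = 0$ since $t_1^2 \ne t_2^2$ ($2$ is invertible modulo $5^{r+1}$, so $t_1^2 = t_2^2$ would give $t_1 = t_2$). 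Finally the line through $(1:0:0)$ and $(0:1:0)$ is $z = 0$, which contains none of the $\sigma(P)$. This settles the first assertion of the lemma.

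For the reduction statement, fix a prime $\fP$ of $\OO_{\Q(\zeta)}$ above $\fq$. Since $\fq \nmid 5$ we have $\fP \nmid 5^{r+1}$, so $X^{5^{r+1}} - 1$ is separable modulo $\fP$ and reduction is injective on $\mu_{5^{r+1}}$; thus the reduction $\bar\zeta$ is again a primitive $5^{r+1}$-th root of unity in the residue field. Consequently every step above carries over verbatim to $\OO_{\Q_{r,5}}/\fq$: the reductions of the six points have the same coordinate shapes, the arithmetic facts invoked (the order of $\delta$, the invertibility of $2$ modulo $5^{r+1}$) concern only exponents and are unchanged, and the conic $xz = y^2$ stays irreducible in every characteristic, including characteristic $2$, where one checks directly that it has no singular point. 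The only places needing a word of care are this irreducibility in characteristic $2$ and the fact that no two of the six points collapse on reduction --- both being immediate once $\bar\zeta$ is known to have full order $5^{r+1}$ --- so I do not anticipate any genuine obstacle beyond this bookkeeping.
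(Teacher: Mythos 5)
Your argument is correct, but it follows a genuinely different route from the paper. The paper first proves a general criterion (Lemma~\ref{lem:Tuv}): using \texttt{Magma} it computes the $6\times 6$ conic determinant and all $\binom{6}{3}=20$ collinearity determinants for $T_{u,v}$, concluding that the six points are in general position if and only if $f(u,v)=(u^4-1)(v^4-1)(u^2-v^2)(u^2v^2-1)\neq 0$; it then specializes $u=\zeta$, $v=\zeta^{\delta}$, notes that each relevant factor is, up to a root of unity, of the form $\zeta^{\mu}-1$ with $5\nmid\mu$, hence is a generator of the unique prime above $5$ in $\OO_{\Q(\zeta)}$, and therefore is nonzero modulo any $\fq\nmid 5$. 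You instead argue synthetically: five of the six points lie on the smooth conic $xz=y^2$ while $(0:1:0)$ does not, so (given distinctness and no three collinear) uniqueness of the conic through five points with no four collinear rules out a conic through all six; collinearity is excluded by the line--conic intersection bound together with the elementary computation for lines through $(0:1:0)$ and the line $z=0$; and the reduction statement follows because reduction at $\fP\mid\fq$, $\fq\nmid 5$, is injective on $\mu_{5^{r+1}}$, so $\bar\zeta$ still has exact order $5^{r+1}$ and every step persists (your characteristic-$2$ check of smoothness of $xz=y^2$ is the right point of care). Your approach is computer-free and characteristic-independent; the paper's approach yields the explicit polynomial criterion $f(u,v)\neq 0$ for arbitrary parameters, which is not merely of local interest but is reused in Section~\ref{sec:nonisomorphic}, where the non-isomorphy lemmas are stated for pairs $(u,v)$ with $f(u,v)\neq 0$, and it also isolates the valuation-theoretic reason for good reduction (the offending factors generate the prime above $5$).
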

Lemma~\ref{lem:generalposition} is proved in Section~\ref{sec:construction}.
Since the six points in \eqref{eqn:six} are in general position,
the surface $X_r$ is a del Pezzo surface of degree $9-6=3$.
Moreover, 
for a prime $\fq \nmid 5$, as
the six points remain in general position modulo $\fq$,
it follows \cite[Section 3]{Scholl} 
that $X_r$ has good reduction modulo $\fq$.
To complete the proof of Theorem~\ref{thm:degree3}
we need to show that among the $X_r$ there are infinitely
many that are pairwise non-isomorphic over $\overline{\Q}$.
We do this in Section~\ref{sec:nonisomorphic} by using
the Clebsch--Salmon 
invariants of the degree $3$ del Pezzo surfaces $X_r$.

The five points in \eqref{eqn:five} are a subset
of the six points in \eqref{eqn:six} and it follows
that they are in general position, and their reduction
modulo any $\fq \nmid 5$ are in general position.
We conclude immediately that the $Y_r$
are degree $9-5=4$ del Pezzo surfaces with good reduction
away from the unique prime above $5$. In Section~\ref{sec:nonisomorphic}
we again use invariants to show that among the $Y_r$ there
are infinitely many that are pairwise non-isomorphic over $\overline{\Q}$. 

All the computations are done in Magma \cite{Magma} and SageMath \cite{sagemath}. You can find the code here: \url{https://github.com/maryam-nowroozi/del-pezzo-surfaces}

\section{background}
In this section, we review some of the definitions we will need in further sections.
Let $k$ be a field.
\begin{definition}
Let $1 \leq r \leq 8$ be an integer and let $P_1,\ldots, P_r$ be $r$ distinct points in $\mathbb{P}^2_k$. We say that $P_1,\ldots, P_r$ are in \emph{general position} if the following hold:
\begin{enumerate}[(i)]
    \item no three of the points lie on a line;
    \item no six of the points lie on a conic;
    \item no eight of the points lie on a singular cubic, with one of these eight points at the singular point.
\end{enumerate}
\end{definition}

\begin{definition}
Let $k$ be a field. A surface $X$ over $k$ is a del Pezzo surface if its anticanonical divisor $-K_{X}$ is ample.
\end{definition}
Note that if $-K_X$ is very ample, then it determines an embedding of $X$ into
$\mathbb{P}^n$ for some $n$, under which $-K_X$ corresponds to a hyperplane
section $H$. The degree of this embedding is $H^2$, which explains why the
integer $(-K_X)^2$ is called the degree of a del Pezzo surface $X$.

\begin{example}
Let $X=Q_1\cap Q_2$ be the intersection of two non-singular quadric hypersurfaces in $\mathbb{P}^4_k$. Then $X$ is a Del Pezzo surface of degree 4 (see \cite{pieropan}, Example 4.5). 
\end{example}

\begin{example}
Let $X$ be a cubic surface over $k$, i.e. a non-singular projective variety of dimension $2$ and degree $3$, 
then $X$ is a Del Pezzo surface of degree $3$ (see \cite{pieropan}, Example 4.6).
\end{example}

\begin{proposition}\label{dps}
Let $k$ be a field. If $X$ is a blow-up of $\mathbb{P}^2_k$ in $r\leq 8$ $k$-rational points in general position, then $X$ is a del Pezzo surface of 
degree $9-r$ over $k$.    
\end{proposition}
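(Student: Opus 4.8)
The plan is to reduce to the case of an algebraically closed ground field and then deduce ampleness of $-K_X$ from the Nakai--Moishezon criterion. First I would observe that the three conditions defining general position are geometric: a line, a conic, or a singular cubic with a prescribed singular point that passes through a prescribed subset of the $k$-rational points $P_i$ and is defined over $\overline{k}$ is already defined over $k$ (the relevant forms cut out a $k$-subspace of a space of forms, which is nonzero as soon as it is nonzero over $\overline{k}$). Moreover $(-K_X)^2$ is computed by intersection theory and so is unchanged by base extension, while a line bundle on a proper $k$-scheme is ample if and only if its pullback to $X_{\overline{k}}$ is; and $X_{\overline{k}}$ is the blow-up of $\mathbb{P}^2_{\overline{k}}$ in the same $r$ points, still in general position. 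Hence I may assume $k=\overline{k}$.

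Write $\pi\colon X\to\mathbb{P}^2$ for the blow-up, $E_1,\dots,E_r$ for the exceptional curves, and $h=\pi^{\ast}\mathcal{O}_{\mathbb{P}^2}(1)$ for the pullback of the class of a line, so that $\Pic(X)$ is free on $h,E_1,\dots,E_r$ with $h^2=1$, $h\cdot E_i=0$ and $E_i\cdot E_j=-\delta_{ij}$. The blow-up formula for the canonical class gives $K_X=-3h+\sum_{i=1}^r E_i$, hence $-K_X=3h-\sum_i E_i$ and $(-K_X)^2=9-r$, which is positive precisely because $r\le 8$; this number will be the degree once ampleness is established. By Nakai--Moishezon it then remains to prove $(-K_X)\cdot C>0$ for every irreducible curve $C\subset X$. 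Each such $C$ is either one of the $E_i$, in which case $(-K_X)\cdot E_i=1$, or else the strict transform of an irreducible plane curve $\Gamma$ of some degree $d\ge 1$; in the latter case $C=dh-\sum_i m_i E_i$ in $\Pic(X)$, where $m_i=\operatorname{mult}_{P_i}\Gamma\ge 0$, and $(-K_X)\cdot C=3d-\sum_{i=1}^r m_i$. So the proposition reduces to the inequality $\sum_{i=1}^r m_i\le 3d-1$ for every irreducible plane curve $\Gamma$ of degree $d$, which I would prove by a case analysis on $d$.

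For $d\le 3$ this is where general position is used. A line is smooth and by condition (i) meets at most two of the $P_i$, so $\sum_i m_i\le 2$; an irreducible conic is smooth and by (ii) meets at most five of the $P_i$, so $\sum_i m_i\le 5$; an irreducible cubic has at most one singular point, necessarily a double point, so either all $m_i\le 1$ and $\sum_i m_i\le r\le 8$, or exactly one $m_i$ equals $2$ (the rest $\le 1$), in which case $\Gamma$ is a singular cubic with that $P_i$ at its singular point and hence passes through at most seven of the $P_j$ in total (by (iii) when $r=8$, trivially when $r\le 7$), giving $\sum_i m_i\le 8$. In every case $\sum_i m_i\le 3d-1$. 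For $d\ge 4$ I would instead invoke the genus formula for plane curves, which yields $\sum_i\binom{m_i}{2}\le p_a(\Gamma)=\binom{d-1}{2}$, that is $\sum_i m_i(m_i-1)\le (d-1)(d-2)$. When $d\ge 5$, Cauchy--Schwarz and $r\le 8$ give $\bigl(\sum_i m_i\bigr)^2\le 8\sum_i m_i^2\le 8\bigl((d-1)(d-2)+\sum_i m_i\bigr)$, which rearranges to $\sum_i m_i\le 4+2\sqrt{2d^2-6d+8}<3d$. When $d=4$ the constraint reads $\sum_i m_i(m_i-1)\le 6$, which forces at most three of the $m_i$ to be at least $2$ and their total to be at most $6$, so $\sum_i m_i\le 6+(8-3)=11<12$.

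I expect the degree-$4$ case to be the only genuinely delicate point, since there the Cauchy--Schwarz estimate is barely too weak and one has to fall back on an explicit enumeration of the admissible multiplicity vectors; the other degrees reduce to the three defining conditions of general position and to routine bookkeeping with intersection numbers on a blow-up. As a remark, for $r\le 6$ one could instead prove the stronger statement that $-K_X$ is very ample, by identifying $|-K_X|$ with the linear system of plane cubics through $P_1,\dots,P_r$ (of the expected projective dimension $9-r$, by general position) and checking that this system separates points and tangent directions on $X$; but the Nakai--Moishezon argument is preferable here because it treats all $r\le 8$ uniformly and does not require very ampleness, which fails for $r=7,8$.
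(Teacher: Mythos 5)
Your proof is correct, but it is worth noting that the paper does not actually prove this statement itself: its ``proof'' is a one-line citation of Proposition~4.34 of the cited notes of Pieropan, whereas you give a complete, self-contained argument. Your route --- reduce to $k=\overline{k}$ (general position is a geometric condition because lines, conics, and cubics singular at a prescribed $k$-point through prescribed $k$-points form $k$-linear systems, and ampleness descends along field extensions), compute $-K_X=3h-\sum E_i$ and $(-K_X)^2=9-r$, and then verify Nakai--Moishezon by bounding $\sum_i m_i\le 3d-1$ for strict transforms of irreducible plane curves --- is the classical argument one finds in the standard references (Manin, Demazure, and indeed the cited notes), so in substance you are reconstructing the proof the paper outsources rather than inventing a new one. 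The details check out: the low-degree cases $d\le 3$ use exactly conditions (i)--(iii) of the paper's definition (an irreducible cubic has a unique singular point of multiplicity $2$, so (iii) gives $\sum m_i\le 2+6=8$); for $d\ge 5$ your Cauchy--Schwarz estimate combined with $\sum_i m_i(m_i-1)\le (d-1)(d-2)$ gives $\sum m_i\le 4+2\sqrt{2d^2-6d+8}<3d$ precisely when $d^2>16$, which is why $d=4$ must be handled separately; and your enumeration there (at most three multiplicities $\ge 2$, summing to at most $6$, hence $\sum m_i\le 11=3d-1$) is exactly right. The only cosmetic caveat is that the genus bound $\sum_i\binom{m_i}{2}\le\binom{d-1}{2}$ should be quoted in a characteristic-free form (nonnegativity of the geometric genus plus $\delta_P\ge\binom{m_P}{2}$), since you have reduced to an arbitrary algebraically closed field, not to $\mathbb{C}$; this is standard and does not affect the argument. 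What your write-up buys over the paper's citation is a uniform treatment of all $r\le 8$ without invoking very ampleness (which indeed fails for $r=7,8$); what the citation buys is brevity.
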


\begin{proof}
See \cite{pieropan}, Proposition 4.34.
\end{proof}

\begin{remark}
If $k$ is an algebraically closed field, then a del Pezzo surface $X$ is either isomorphic to $\mathbb{P}^1\times\mathbb{P}^1$  or 
$X$ arises as a blowing up of $\mathbb{P}^2_k$ in $r\leq8$ points in general position and $K_X^2=9-r$ (see \cite{pieropan}, Theorem 4.22). 
In general, this does not hold over an arbitrary field.  
\end{remark}

\section{Proof of Lemma~\ref{lem:generalposition}}\label{sec:construction} 

The goal of this section is to prove Lemma~\ref{lem:generalposition}. 
Let $k$ be a field and let $u$, $v \in k^\times$. Let
\[
T_{u,v} \; = \;
\{(u^2:u:1),~(u^{-2} : u^{-1},1),~(v^2:v:1),~(v^{-2}:v^{-1}:1),~(1:0:0),~(0:1:0)\}.
\]
We note that the set of six points in \eqref{eqn:six} is simply $T_{\zeta,\zeta^\delta}$.
\begin{lemma}\label{lem:Tuv}
Let
\[
f(u,v)=
		(u^4-1)(v^4-1)(u^2-v^2)(u^2 v^2-1).
\]
The set of six points $T_{u,v}$ is in general position if and only if $f(u,v) \ne 0$.
\end{lemma}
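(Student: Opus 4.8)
The plan is to detect, for each of the conditions defining general position, an explicit polynomial in $u,v$ whose vanishing is equivalent to the failure of that condition, and then to check that the union of these divisors equals the divisor of $f$. Label the six points of $T_{u,v}$ as $P_1=(u^2:u:1)$, $P_2=(u^{-2}:u^{-1}:1)=(1:u:u^2)$, $P_3=(v^2:v:1)$, $P_4=(v^{-2}:v^{-1}:1)=(1:v:v^2)$, $P_5=(1:0:0)$ and $P_6=(0:1:0)$.

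The one structural observation that does most of the work is that $P_1,P_2,P_3,P_4$ and $P_5$ all lie on the smooth conic $C : Y^2=XZ$, whereas $P_6\notin C$ because $1\neq 0$. This has two consequences. First, a line meets a smooth conic in at most two points, so no three distinct points among $P_1,P_2,P_3,P_4$ can be collinear; hence the $\binom{4}{3}$ triples internal to $\{P_1,\dots,P_4\}$ never violate condition (i) once we know the six points are distinct, and their determinants need not be computed. Second, five points no three of which are collinear lie on a unique conic, so any conic through all of $P_1,\dots,P_6$ would be forced to coincide with $C$; since $P_6\notin C$ this is impossible, and therefore condition (ii) --- no six on a conic --- holds automatically as soon as the six points are distinct and condition (i) holds. (Condition (iii) is vacuous for six points.) The lemma is thus reduced to: $T_{u,v}$ consists of six distinct points with no three collinear if and only if $f(u,v)\neq 0$.

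The rest is a finite check. The triple $\{P_5,P_6,P_i\}$ with $i\in\{1,2,3,4\}$ is never collinear, since $P_1,\dots,P_4$ lie off the line $Z=0$ while $P_5,P_6$ lie on it. For each of the $\binom{4}{2}$ triples $\{P_5,P_i,P_j\}$ and the $\binom{4}{2}$ triples $\{P_6,P_i,P_j\}$ with $i,j\in\{1,2,3,4\}$, I would evaluate the $3 \times 3$ coordinate determinant; after discarding the units $u,v$ each equals a single factor among $u-v$, $u^2-1$, $v^2-1$, $uv-1$, $u^4-1$, $v^4-1$, each of which divides $f=(u^2-1)(u^2+1)(v^2-1)(v^2+1)(u^2-v^2)(u^2v^2-1)$, so each such triple is collinear only where $f$ vanishes. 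In parallel I would tabulate coincidences: $P_1=P_2\iff u^2=1$, $P_3=P_4\iff v^2=1$, $P_1=P_3\iff P_2=P_4\iff u=v$, $P_1=P_4\iff P_2=P_3\iff uv=1$, and $P_5,P_6$ are distinct from each other and from every $P_i$; all four nontrivial coincidence loci lie inside $\{f=0\}$. This gives both directions. If $f(u,v)\neq 0$ then none of the six quadratic factors of $f$ vanishes, so the six points are distinct, none of the tabulated collinearities occurs, the B\'ezout remark disposes of the internal triples, and the conic-uniqueness remark disposes of condition (ii); hence $T_{u,v}$ is in general position. Conversely, if $f(u,v)=0$ then at least one of $u^2-1$, $u^2+1$, $v^2-1$, $v^2+1$, $u^2-v^2$, $u^2v^2-1$ vanishes, and in each case $T_{u,v}$ is visibly degenerate: two of the points coincide when $u^2=1$, $v^2=1$, $u=v$ or $uv=1$, while $P_1,P_2,P_6$ (resp.\ $P_3,P_4,P_6$; $P_1,P_3,P_6$; $P_1,P_4,P_6$) become collinear when $u^2=-1$ (resp.\ $v^2=-1$; $u=-v$; $uv=-1$).

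I expect the only real obstacle to be bookkeeping: making sure the finitely many determinant and coincidence computations really cover all $\binom{6}{3}=20$ triples and all $\binom{6}{2}=15$ pairs, and that the irreducible factors appearing match those of $f$ with the correct multiplicities. The conceptual inputs --- B\'ezout for a line meeting a smooth conic, and uniqueness of the conic through five points with no three collinear --- are entirely standard, so the core of the proof is a short mechanical verification, easily done (and double-checked) by hand or machine.
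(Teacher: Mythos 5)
Your proposal is correct, and it takes a genuinely more structural route than the paper. The paper's proof is a direct machine computation: it forms the $6\times 6$ determinant in the monomials $(x^2,y^2,z^2,xy,xz,yz)$ to detect a conic through all six points (Magma gives $M_{u,v}=(u+1)(u-1)(v+1)(v-1)(u-v)^2(uv-1)^2/u^3v^3$), and then computes all $\binom{6}{3}=20$ collinearity determinants, observing that every factor that occurs divides $f$. You instead exploit the observation that $P_1,\dots,P_5$ lie on the smooth conic $Y^2=XZ$ while $P_6$ does not: this disposes of condition (ii) by uniqueness of the conic through five points with no three collinear, and B\'ezout kills all collinearity triples contained in $\{P_1,\dots,P_5\}$, leaving only the handful of determinants involving $P_6$ (and the trivially non-collinear triples through both coordinate points) to compute; you also make explicit two things the paper's ``The lemma follows'' leaves implicit, namely the distinctness analysis of the six points and the converse direction, where for each irreducible factor of $f$ you exhibit either a coincidence or a collinear triple. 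What the paper's approach buys is that it is a single uniform, easily machine-verified calculation; what yours buys is conceptual transparency (it explains \emph{why} the conic condition reduces to distinctness) and far less computation. One cosmetic slip: your stated list of factors arising from the triples $\{P_6,P_i,P_j\}$ omits $u^2-v^2$ and $u^2v^2-1$ (which come from $i\in\{1,2\}$, $j\in\{3,4\}$), but these do divide $f$ and your converse case analysis ($u=-v$, $uv=-1$) accounts for them correctly, so the argument is unaffected.
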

\begin{proof}
Let $M_{u,v}$ be the $6\times 6$ determinant with rows $(x^2,y^2,z^2,xy,xz,yz)$ with
$(x,y,z)$ running through the elements of $T_{u,v}$.  We see that
the six points of $T_{u,v}$ lie on a conic if and if this determinant is $0$.
A quick \texttt{Magma} computation shows that
\[
	M_{u,v}=(u+1)(u-1)(v+1)(v-1)(u-v)^2(uv-1)^2/u^3v^3.
\]

Moreover, three points $(x_i: y_i : z_i)$, with $i=1,2,3$, lie on a line if and only if the
$3 \times 3$ determinant with rows $(x_i,y_i,z_i)$ is zero. We computed this determinant
for the $\binom{6}{3}=20$ possible choices of triples of points from among six points
of $T_{u,v}$. All these determinants have numerators with factors  
among $u \pm 1$, $u^2+1$, $v \pm 1$, $v^2+1$,  $u \pm v$, $uv \pm 1$.
The lemma follows.
\end{proof}

\subsection*{Proof of Lemma~\ref{lem:generalposition}}
Let $r \ge 1$ and $\zeta=\exp(2\pi i/5^{r+1})$.
Write $u=\zeta$, $v=\zeta^\delta$. Then the six points in \eqref{eqn:six}
form the set $T_{u,v}$.
We consider the elements
\[
u^4-1, v^4-1, u^2-v^2, u^2 v^2-1.
\]
By Lemma~\ref{lem:Tuv}, the six points are in general position provided
these quantities are non-zero.
All of these, up to multiplication by a root of unity, have the form $\zeta^\mu - 1$,
where $\mu$ is among $4$, $4 \delta$, $2\delta-2$, $2 \delta+2$.
Since $\delta \equiv 2 \pmod{5}$, we see that $\mu \equiv 4$, $3$, $2$, $1 \pmod{5}$ respectively.
Thus in all cases $\zeta^\mu$ is a primitive $5^{r+1}$-th root of unity, and it follows that
the six points in \eqref{eqn:six} are in general position.

Next we consider the six points modulo a prime ideal $\fp$ of $\OO_{\Q_{r,5}}$. 
We note that, for the above four values of $\mu$, that the element $\zeta^\mu-1$ generates the unique
prime ideal of $\OO_{\Q(\zeta)}$ above $5$. Let $\fp$ be a prime ideal of $\OO_{\Q_{r,5}}$ 
satisfying $\fp \nmid 5$. 
Let $k=\OO_{\Q_{r,5}}/\fp$. Then the reduction of $\zeta^\mu-1$ in $\overline{k}$ is non-zero. 
Thus the reduction of six points in \eqref{eqn:six} in $\PP^2(\overline{k})$ are in general position. 
This completes the proof.

\section{Isomorphism of del Pezzo surfaces}\label{sec:nonisomorphic}
\subsection{Degree 3}
The goal of this section is to prove that there exists an infinite subfamily of $\lbrace X_r: r \geq 1\rbrace$ whose members are pairwise non-isomorphic. We will confirm this by using the Clebsch-Salmon invariants for cubic surfaces. 

We now recall the definition of Clebsch-Salmon invariants. Let $X$ be a cubic surface given by a system of equations of the form 
\begin{align*}
a_0x_0^3+a_1x_1^3+a_2x_2^3+a_3x_3^3+a_4x_4^3=0\\
x_0+x_1+x_2+x_3+x_4=0
\end{align*}
which is also called its \emph{pentahedral form}. The coefficients $a_0,a_1,a_2,a_3,a_4$ are called \emph{pentahedral coefficients} of the surface. 

\begin{definition}[Clebsch-Salmon invariants]\label{CS}
Let $\sigma_1,\ldots, \sigma_5$ be the elementary symmetric functions. We have the following invariants of degrees 8,16,24,32, and 40, respectively
\begin{align*}
&I_8=\sigma_4^2-4\sigma_3\sigma_5,\\
&I_{16}=\sigma_1\sigma_5^3,\\
&I_{24}=\sigma_4\sigma_5^4,\\
&I_{32}=\sigma_2\sigma_5^6,\\
&I_{40}=\sigma_5^8.
\end{align*}
\end{definition}

Let $X_{u,v}$ be the blow-up of $\mathbb{P}^2$ in $T_{u,v}$ and let $[I_8(X_{u,v}):I_{16}(X_{u,v}):I_{24}(X_{u,v}):I_{32}(X_{u,v}):I_{40}(X_{u,v})]\in\mathbb{P}^{8,16,24,32,40}$  be the invariants of $X_{u,v}$. The following lemma proves that there are at most finitely many del Pezzo surfaces isomorphic over $\overline{\mathbb{Q}}$ to $X_{u,v}$.

\begin{lemma}
Let $(u,v)\in \overline{\mathbb{Q}}$, $f(u,v)\neq 0$. Then, there are at most finitely many pairs $(r,s) \in \overline{\mathbb{Q}}$, $f(r,s)\neq 0$ such that $X_{r,s}$ is isomorphic over $\overline{\mathbb{Q}}$ to $X_{u,v}$
\end{lemma}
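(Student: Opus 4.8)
The plan is to reduce the isomorphism question to a statement about fibers of a morphism of varieties. First, I would recall the classical fact (Clebsch, Salmon; see also the references that the paper presumably cites) that two smooth cubic surfaces in pentahedral form are isomorphic over $\overline{\Q}$ if and only if their Clebsch--Salmon invariant tuples agree as points of the weighted projective space $\PP^{8,16,24,32,40}$. Thus, writing $\Phi(u,v) = [I_8(X_{u,v}):I_{16}(X_{u,v}):I_{24}(X_{u,v}):I_{32}(X_{u,v}):I_{40}(X_{u,v})]$, the surfaces $X_{r,s}$ and $X_{u,v}$ are isomorphic over $\overline{\Q}$ exactly when $\Phi(r,s)=\Phi(u,v)$. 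So the lemma amounts to showing that for fixed $(u,v)$ with $f(u,v)\neq 0$, the fiber $\Phi^{-1}(\Phi(u,v))$, intersected with the open locus $\{f\neq 0\}$, is finite.

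The key computational step is to obtain an explicit description of $\Phi$. For this I would first pass from the blow-up model $X_{u,v}$ to a plane cubic / pentahedral model: the anticanonical map embeds $X_{u,v}$ as a cubic surface in $\PP^3$, and one computes its pentahedral form (equivalently the pentahedral coefficients $a_0,\dots,a_4$, which are algebraic functions of $u,v$), then feeds the elementary symmetric functions $\sigma_1,\dots,\sigma_5$ of the $a_i$ into Definition~\ref{CS}. Concretely, Magma will produce the $I_{8d}(X_{u,v})$ as explicit rational functions (or algebraic functions) of $u$ and $v$. The point to extract is that $\Phi$ is a non-constant rational map from the $(u,v)$-surface to the weighted projective space; in fact it suffices to exhibit \emph{one} coordinate ratio, say $I_8^{5}/I_{40}$ or $I_{16}^{5}/I_{40}^{2}$, which is a non-constant rational function $g(u,v)$ on the open locus $\{f\neq 0\}$, together with a second such ratio $h(u,v)$ so that $(g,h)$ separates points generically.

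Granting that, the argument concludes as follows. The condition $X_{r,s}\cong_{\overline{\Q}} X_{u,v}$ forces $(r,s)$ to lie on the curve (or finite union of curves) $C_1 = \{g = g(u,v)\}$ and also on $C_2 = \{h = h(u,v)\}$ inside the surface $\{f\neq 0\}$. If $C_1$ and $C_2$ share no common component, then $C_1\cap C_2$ is a finite set, which is what we want. So the genuine content is to check that the various invariant-ratios are not all functionally dependent in a way that makes $\Phi$ have positive-dimensional generic fibers — i.e. that $\Phi$ is generically finite onto its image. I expect this to be the main obstacle: it is a concrete but slightly delicate algebraic verification, best handled by computing the Jacobian of a suitable pair $(g,h)$ of invariant-ratios with respect to $(u,v)$ and checking it is not identically zero (equivalently, that the image of $\Phi$ is a surface, not a curve or point). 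Once the Jacobian is shown to be a nonzero rational function, generic finiteness of $\Phi$ follows, hence all fibers over the image are finite (after possibly removing the proper closed subset where fiber dimension jumps — but since source and target both have dimension $2$ and $\Phi$ is dominant onto a surface, \emph{every} fiber is finite by upper semicontinuity of fiber dimension combined with the fact that a dominant morphism of irreducible varieties of the same dimension is generically finite, and here one checks directly there is no exceptional locus of jumping, or simply intersects with the two explicit curves as above which avoids semicontinuity subtleties entirely). This gives the finiteness of $\{(r,s): f(r,s)\neq 0,\ X_{r,s}\cong_{\overline{\Q}} X_{u,v}\}$, completing the proof.
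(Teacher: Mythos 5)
Your reduction is the same as the paper's in its first half: you use only the forward implication ``isomorphic over $\overline{\Q}$ $\Rightarrow$ the absolute invariants agree,'' and you then try to pin down $(r,s)$ by two invariant-ratio equations. (Incidentally, you do not need the converse ``equal Clebsch--Salmon invariants $\Rightarrow$ isomorphic,'' which would require care about existence and uniqueness of the pentahedral form; neither you nor the paper actually uses it.) The genuine gap is in your finiteness step. From non-vanishing of a Jacobian you get that the invariant map $\Phi$ (equivalently the pair $(g,h)$) is dominant and generically finite, and you then assert ``hence all fibers over the image are finite,'' citing upper semicontinuity plus equal dimensions. That inference is false: a dominant, generically finite morphism of surfaces can have positive-dimensional special fibers --- e.g.\ $(r,s)\mapsto (r,rs)$ is birational onto $\mathbb{A}^2$, yet its fiber over $(0,0)$ is the whole line $r=0$. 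Upper semicontinuity lets the fiber dimension jump \emph{up} at special points, which is exactly the danger. Your fallback of ``just intersect the two explicit curves $C_1=\{g=g(u,v)\}$ and $C_2=\{h=h(u,v)\}$'' does not avoid this: for the particular level values attached to the given $(u,v)$, those two curves could a priori share a common component, and Jacobian non-vanishing is only a statement about generic level values. Since the lemma is claimed for \emph{every} $(u,v)$ with $f(u,v)\neq 0$, not just generic ones, your argument does not close.

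What is missing is a check that, for the specific pair of equations attached to the given $(u,v)$, the two curves have no common component (equivalently, that the ideal they generate is zero-dimensional). The paper supplies exactly this by elimination: with $\alpha=I_{16}(X_{u,v})/I_8(X_{u,v})^2$ and $\beta=I_{24}(X_{u,v})/I_8(X_{u,v})^3$ treated as parameters, it sets $F(r,s)=I_{16}(X_{r,s})-\alpha I_8(X_{r,s})^2$ and $G(r,s)=I_{24}(X_{r,s})-\beta I_8(X_{r,s})^3$, computes the resultants $h_1(r)=\operatorname{Res}_s(F,G)$ and $h_2(s)=\operatorname{Res}_r(F,G)$ in \texttt{Magma}, checks they are nonzero, and bounds the number of admissible pairs $(r,s)$ by $\deg(h_1)\cdot\deg(h_2)=518400$, a bound uniform in $(u,v)$. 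To repair your write-up you would need to carry out such a resultant (or Gr\"obner/elimination) computation, or otherwise argue that no level curve of your chosen ratios sits inside a single fiber for the parameter values that actually occur; the Jacobian computation alone does not do this.
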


\begin{proof}
Suppose $X_{r,s}\cong_{\overline{\mathbb{Q}}}X_{u,v}$. Then 
\begin{equation*}
\frac{I_{16}(X_{u,v})}{I_8^2(X_{u,v})}=\frac{I_{16}(X_{r,s})}{I_8^2(X_{r,s})}\ \ \text{and}\ \  \frac{I_{24}(X_{u,v})}{I_8^3(X_{u,v})}=\frac{I_{24}(X_{r,s})}{I_8^3(X_{r,s})}
\end{equation*}

Let 
\begin{align*}
&F(r,s)=I_{16}(X_{u,v})-\alpha I_8^2(X_{u,v})\in \mathbb{Q}(\alpha)[r,s]\\
&G(r,s)=I_{24}(X_{u,v})-\beta I_8^3(X_{u,v})\in \mathbb{Q}(\beta)[r,s]
\end{align*}

Then, $F(r,s)=G(r,s)=0$. Let $h_1(r)=\operatorname{Res}_{s}(F(r,s),G(r,s))$ and $h_2(s)=\operatorname{Res}_{r}(F(r,s),G(r,s))$. Then $h_1(r)=h_2(s)=0$. So, the number of possibilities for the pair $(r,s)$ is at most $\deg(h_1)\cdot\deg(h_2)$. A quick \texttt{Magma} computation shows that $\deg(h_1)\cdot\deg(h_2)=518400$.
\end{proof}

This proves that within the family $\{X_r \; : \; r \ge 1\}$,
there is an infinite subfamily whose members are pairwise non-isomorphic
over $\overline{\Q}$.

\subsection{degree 4}
Recall that a Del Pezzo surface of degree 4 is a complete intersection of two quadrics in $\mathbb{P}^4$. Let $X\subset \mathbb{P}^4$ be a smooth surface defined by the intersection of two quadrics over an algebraically closed field $k$ of characteristic different from 2.  We know that $X$ is charactrized upto isomorphism by the degeneracy locus of the pencil of quadrics containing $X$, i.e., by the form  $$h(t,w)=\det(tA_F+wA_G),$$ where $A_F$ and $A_G$ are symmetric $5\times5$ matrices whose associated quadric forms define $X$ (see \cite{modulidegree4del} for more details).

The two-dimensional space of binary quintic forms with non-vanishing discriminant up to linear change of variables serves as a moduli space of smooth del Pezzo surface of degree 4.

The Del Pezzo surface defined by blowing up the five points 
\begin{align*}
&P_1=(u^2:u:1),\\
&P_2=(u^{-2}:u^{-1}:1),\\
&P_3=(v^2:v:1),\\
&P_4=(v^{-2}:v^{-1}:1),\\
&P_5=(1:0:0).
\end{align*}
is a complete intersection of two quadrics in $\mathbb{P}^4$ given below
\begin{align*}
    &F(x_0,x_1,x_2,x_3,x_4)=-x_1x_2+x_2^2+x_0x_3+ax_2x_3+bx_3^2+ax_3x_4+x_4^2,
    \\&G(x_0,x_1,x_2,x_3,x_4)=-x_1x_3-ax_3^2+x_0x_4+ax_2x_4.& 
\end{align*}
where $a=\frac{-u^2v-uv^2-u-v}{uv}$
and $b=\frac{u^2v^2+u^2+2uv+v^2+1}{uv}$.

The defined matrices $A_F$ and $A_G$ are as follows: 

\[
A_F=
\begin{bmatrix}
0 & 0 & 0 & 1/2 & 0\\
0 & 0 & -1/2 & 0 & 0\\
0 & -1/2 & 1 & a/2 & 0\\
1/2 & 0 & a/2 & b & a/2\\
0 & 0 & 0 & a/2 & 1
\end{bmatrix}
,\  
A_G=
\begin{bmatrix}
0 & 0 & 0 & 0 & 1/2\\
0 & 0 & 0 & -1/2 & 0\\
0 & 0 & 0 & 0 & a/2\\
0 & -1/2 & 0 & -a & 0\\
1/2 & 0 & a/2 & 0 & 0
\end{bmatrix}
\]
We have 
\[h(t,w)=\det(tA_F+wA_G)=
\begin{vmatrix}
0 & 0 & 0 & \frac{1}{2}t & \frac{1}{2}w\\
0 & 0 & -\frac{1}{2}t & -\frac{1}{2}w & 0\\
0 & -\frac{1}{2}t & t & \frac{a}{2}t & \frac{a}{2}w\\
\frac{1}{2}t & -\frac{1}{2}w & \frac{a}{2}t & -aw+bt & \frac{a}{2}t\\
\frac{1}{2}w & 0 & \frac{a}{2}w & \frac{a}{2}t & t
\end{vmatrix}
\]

So, we get $h(t,w)=\frac{1}{16}(t^5-at^4w+bt^3w^2-at^2w^3+tw^4)$ which is a binary quintic. The ring $SL_2$-invariants of binary quintics is generated by four invariants, $I_4, I_8, I_{12}$, and, $I_{18}$ (see \cite{schur}).

Let $Y_{u,v}$ be the blow-up of $\mathbb{P}^2$ in five points $P_1,\ldots, P_5$ and let $[I_4:I_8:I_{12}:I_{18}]\in\mathbb{P}^{4,8,12,18}$  be the invariants of the degeneracy locus of the pencil of quadrics containing $Y_{u,v}$, defined above. The following lemma proves that there are at most finitely many del Pezzo surfaces isomorphic over $\overline{\mathbb{Q}}$ to $Y_{u,v}$.

\begin{lemma}
Let $(u,v)\in \overline{\mathbb{Q}}$, $f(u,v)\neq 0$. Then, there are at most finitely many pairs $(r,s) \in \overline{\mathbb{Q}}$, $f(r,s)\neq 0$ such that $Y_{r,s}$ is isomorphic over $\overline{\mathbb{Q}}$ to $Y_{u,v}$
\end{lemma}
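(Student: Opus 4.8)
The plan is to run the degree~$3$ argument essentially verbatim, with the Clebsch--Salmon invariants replaced by the $SL_2$-invariants $I_4$, $I_8$, $I_{12}$, $I_{18}$ of the binary quintic
\[
h(t,w)=\tfrac{1}{16}\bigl(t^5-a\,t^4w+b\,t^3w^2-a\,t^2w^3+t\,w^4\bigr)
\]
attached to $Y_{u,v}$. First I would record that, because $f(u,v)\neq 0$, the five points $P_1,\dots,P_5$ form a subset of $T_{u,v}$, which is in general position by Lemma~\ref{lem:Tuv}; hence by Proposition~\ref{dps} the surface $Y_{u,v}$ is a genuine del Pezzo surface of degree $4$, in particular smooth, so $h$ has non-vanishing discriminant. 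Over $\overline{\Q}$ such a surface is determined up to isomorphism by the pencil of quadrics through it, hence by the $\operatorname{GL}_2$-equivalence class of $h$, hence by the weighted-projective point $[I_4:I_8:I_{12}:I_{18}]\in\PP^{4,8,12,18}$. Thus $Y_{r,s}\cong_{\overline{\Q}}Y_{u,v}$ forces these invariant points to coincide.

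Assuming first that $I_4(Y_{u,v})\neq 0$, equality of the invariant points gives
\[
\frac{I_8(Y_{r,s})}{I_4(Y_{r,s})^2}=\frac{I_8(Y_{u,v})}{I_4(Y_{u,v})^2}=:\alpha,\qquad \frac{I_{12}(Y_{r,s})}{I_4(Y_{r,s})^3}=\frac{I_{12}(Y_{u,v})}{I_4(Y_{u,v})^3}=:\beta .
\]
Each $I_k$ is a polynomial in the coefficients $a,b$ of $h$, which are themselves rational functions of the blow-up parameters with denominator a power of their product; clearing denominators turns the two equations into
\[
F(r,s)=I_8(Y_{r,s})-\alpha\,I_4(Y_{r,s})^2=0,\qquad G(r,s)=I_{12}(Y_{r,s})-\beta\,I_4(Y_{r,s})^3=0,
\]
with $F\in\Q(\alpha)[r,s]$ and $G\in\Q(\beta)[r,s]$. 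I would then set $h_1(r)=\operatorname{Res}_s(F,G)$ and $h_2(s)=\operatorname{Res}_r(F,G)$: provided these are not identically zero, any admissible pair $(r,s)$ has $r$ a root of $h_1$ and $s$ a root of $h_2$, so there are at most $\deg(h_1)\cdot\deg(h_2)$ of them, and a \texttt{Magma} computation produces the explicit numerical bound, just as the number $518400$ appears in the degree~$3$ case. The locus $I_4(Y_{u,v})=0$ is a proper closed subset and is handled the same way after swapping $\bigl(I_8/I_4^2,\,I_{12}/I_4^3\bigr)$ for another pair of ratios of invariants of equal weight, e.g.\ $I_{12}^2/I_8^3$ and $I_{18}^2/I_{12}^3$, valid where $I_8,I_{12}\neq 0$; since $\operatorname{disc}(h)\neq 0$ the four invariants do not all vanish, so some choice of pair always applies.

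The one genuinely non-formal point — and hence the step I expect to be the main obstacle — is the verification that the relevant resultants do not vanish identically, equivalently that $F$ and $G$ have no common factor in $\overline{\Q(\alpha,\beta)}[r,s]$, equivalently that the fibre of $(r,s)\mapsto Y_{r,s}$ through $Y_{u,v}$ is zero-dimensional. Conceptually this is plausible, since the family $\{Y_{r,s}\}$ is two-dimensional and so is the moduli space of smooth degree~$4$ del Pezzo surfaces, so the modular map is generically quasi-finite; but for a clean statement one must rule out a positive-dimensional fibre over \emph{every} $(u,v)$ with $f(u,v)\neq 0$, and if bad curves in the $(u,v)$-plane were to occur one would separate their points by adjoining a further invariant relation. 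In practice this is the content of the \texttt{Magma} computation; everything else is a direct transcription of the degree~$3$ argument, and, exactly as there, it shows that infinitely many of the $Y_r$ are pairwise non-isomorphic over $\overline{\Q}$.
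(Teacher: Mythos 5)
Your proposal follows essentially the same route as the paper's proof: equate the weight-scaled ratios $I_8/I_4^2$ and $I_{12}/I_4^3$ of the invariants of the binary quintic $h(t,w)$, form $F(r,s)$ and $G(r,s)$, take the resultants $h_1=\operatorname{Res}_s(F,G)$ and $h_2=\operatorname{Res}_r(F,G)$, and bound the number of pairs $(r,s)$ by $\deg(h_1)\cdot\deg(h_2)$, computed in \texttt{Magma} to be $518400$. You are in fact somewhat more careful than the paper, which tacitly assumes $I_4(Y_{u,v})\neq 0$ and does not comment on the possible identical vanishing of the resultants; the two caveats you flag are genuine, and your suggested remedies (alternative equal-weight invariant ratios, and checking non-vanishing of the resultants) are exactly what a fully rigorous version of the paper's argument would add.
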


\begin{proof}
Suppose $Y_{r,s}\cong_{\overline{\mathbb{Q}}}Y_{u,v}$. Then 
\begin{equation*}
\frac{I_8(Y_{u,v})}{I_4^2(Y_{u,v})}=\frac{I_8(Y_{r,s})}{I_4^2(Y_{r,s})}\ \ \text{and}\ \  \frac{I_{12}(Y_{u,v})}{I_4^3(Y_{u,v})}=\frac{I_{12}(Y_{r,s})}{I_4^3(Y_{r,s})}
\end{equation*}

Let 
\begin{align*}
&F(r,s)=I_8(Y_{u,v})-\alpha I_4^2(Y_{u,v})\in \mathbb{Q}(\alpha)[r,s]\\
&G(r,s)=I_{12}(Y_{u,v})-\beta I_4^3(Y_{u,v})\in \mathbb{Q}(\beta)[r,s]
\end{align*}

Then, $F(r,s)=G(r,s)=0$. Let $h_1(r)=\operatorname{Res}_{s}(F(r,s),G(r,s))$ and $h_2(s)=\operatorname{Res}_{r}(F(r,s),G(r,s))$. Then $h_1(r)=h_2(s)=0$. So, the number of possibilities for the pair $(r,s)$ is at most $\deg(h_1)\cdot\deg(h_2)$. A quick \texttt{Magma} computation shows that $\deg(h_1)\cdot\deg(h_2)=518400$.
\end{proof}

This proves that within the family $\{Y_r \; : \; r \ge 1\}$,
there is an infinite subfamily whose members are pairwise non-isomorphic
over $\overline{\Q}$.
\bibliography{ref}
\bibliographystyle{plain}
\end{document}